\newcommand{\R}{\mathbb{R}}
\newcommand{\Sph}{\mathbb{S}}
\newcommand{\eps}{\varepsilon}
\newcommand{\dif}{\mathrm{d}} 
\newcommand{\Dif}{\mathrm{D}}
\newcommand{\coan}{C^{0,\alpha}_{\nu - 2}}
\newcommand{\ctan}{C^{2,\alpha}_\nu}
\newcommand{\apsol}{\tilde \Sigma_r(\sigma, \delta, s)}
\newcommand{\difop}{\Phi_{r, \sigma, \delta, s}}
\newtheorem*{mainthm}{Main Theorem}
\newtheorem{thm}{Theorem}
\newtheorem{lemma}[thm]{Lemma}
\theoremstyle{definition}
\newtheorem{defn}[thm]{Definition}
\newtheoremstyle{rmk}{5pt}{5pt}{}{}{\scshape}{:}{.5em}{}
\theoremstyle{rmk}
\newcommand{\mylabel}
	{\label}
\begin{document}

\title{A Gluing Construction for Prescribed Mean Curvature} 

\author{Adrian Butscher \thanks{butscher@math.stanford.edu, Department of Mathematics, Stanford University, Stanford, CA 94305}}

\maketitle

\begin{abstract}
	The gluing technique is used to construct hypersurfaces in Euclidean space having approximately constant prescribed mean curvature.  These surfaces are perturbations of unions of finitely many spheres of the same radius assembled end-to-end along a line segment.  The condition on the existence of these hypersurfaces is the vanishing of the sum of certain integral moments of the spheres with respect the prescribed mean curvature function.
\end{abstract}

\renewcommand{\baselinestretch}{1.25}
\normalsize

\section{Introduction}

In the recent paper \cite{memazzeo}, Butscher and Mazzeo have constructed examples of constant mean curvature (CMC) hypersurfaces in a Riemannian manifold $M$ with axial symmetry by gluing together small spheres positioned end-to-end along a geodesic $\gamma$. The examples they have constructed have very large mean curvature $2/r$ and lie within a distance $\mathcal O(r)$ of either a segment or a ray of $\gamma$; hence we say that these surfaces \emph{condense} to the appropriate subset of $\gamma$. Such surfaces cannot exist in Euclidean space, and their existence relies on the fact that the gradient of the ambient scalar curvature of $M$ acts as a `friction term' which permits the usual analytic gluing construction (akin to the classical gluing constructions pioneered by Kapouleas \cite{kapouleas7},\cite{kapouleas6}) to be carried out.  The purpose of this paper is to show the same techniques used in \cite{memazzeo} can be adapted in a straightforward manner to show that a similar construction is possible in a much simpler yet fairly general context: that of hypersurfaces having near-constant \emph{prescribed} mean curvature in Euclidean space.  The essence of the gluing construction carried out herein therefore lies in identifying and appropriately exploiting the analogous `friction term' appearing in this setting.

Let $F: \R^{n+1} \times T\R^{n+1} \rightarrow \R$ be a given, fixed smooth function.  For simplicity and to maintain the parallel with \cite{memazzeo}, we will assume that $F$ has cylindrical symmetry in the following sense.  Endow $\R^{n+1}$ with coordinates $(x^0, x^1, \ldots, x^n)$ and let $G \subseteq O(n+1)$ be the set of orthogonal transformations that fix the $x^0$-axis.   Each rotation $R \in G$ acts on $T\R^{n+1}$ via the differential $R_\ast :  T\R^{n+1} \rightarrow T\R^{n+1}$. We will  now demand that $F(R(p) , R_\ast V_p) = F(p, V_p)$ for all $(p, V_p)  \in \R^{n+1} \times T\R^{n+1}$.  The prescribed mean curvature problem that will be solved in this paper is to find, for every sufficiently small $r \in \R_+$, a $G$-invariant hypersurface $\Sigma_r$ which satisfies
\begin{equation}
	\label{eqn:pmc}
	H[\Sigma_r](p) = 2 + r^2 F( p, N_{\Sigma_r}(p) ) \qquad \forall \; p \in \Sigma_r
\end{equation}
where $H[\Sigma_r]$ is the mean curvature of $\Sigma_r$ and $ N_{\Sigma_r}$ is the unit normal vector field of $\Sigma_r$.  Furthermore, this hypersurface will be constructed by gluing together a finite number $K$ of spheres of radius one (and thus of mean curvature exactly equal to two) whose centres lie on the $x^0$-axis  using small catenoidal necks having the $x^0$-axis as their axes of symmetry. 

In order to properly state the Main Theorem, we must make the following definition, which is meant to capture the most important effect of the prescribed mean curvature function $F$ on the surface that we're attempting to construct in this paper.

\begin{defn}
	\label{def:moment}
	Let $S$ be a compact surface in $\R^{n+1}$.  The \emph{$F$-moment} of $S$ is the quantity
	$$\mu_F(S) := \int_S F(x, N_S(x)) J \dif \mathrm{Vol}_S$$
	where $N_S$ is the unit normal vector field of $S$ and $\dif \mathrm{Vol}_S$ is the induced volume form of $S$, while $J : S \rightarrow \R$ is defined by $J(x) := \langle \frac{\partial}{\partial x^0} , N_S(x) \rangle$ for $x \in S$.
\end{defn}

\noindent Now let $p_k^0(s) := (s + 2(k-1), 0, \ldots, 0)$ and consider the spheres $S_k(s) := \partial B_1(p_k^0(s))$.  These spheres are positioned along the $x^0$-axis in such a way that each $S_k(s)$ makes tangential contact with $S_{k\pm1}(s)$.  The following theorem will be proved in this paper.

\begin{mainthm}
	Suppose that there is $s_0 \in \R$ such that
	\begin{itemize}
		\item the $F$-moments of the spheres $S_k(s_0)$ satisfy $\sum_{k=1}^K \mu_F (S_k(s_0) ) = 0 $
				
		\item the function $s \mapsto \sum_{k=1}^K \mu_F(S_k(s) )$ has non-vanishing derivative at $s = s_0$, 
	\end{itemize}	
	then there for all sufficiently small $r>0$, there is a smooth, embedded hypersurface $\Sigma_r$ which is a small perturbation of $\bigcup_{k=1}^{K} S_k(s_0)$ that satisfies the prescribed mean curvature equation \eqref{eqn:pmc}.
\end{mainthm}

It is easy to find a situation in which the conditions of the Main Theorem hold.  For example: if $F(\cdot, \cdot)$ is such that $\mu_F( \partial B_1 (x^0, x^1, \ldots, x^n) )$ is negative whenever $x^0$ is sufficiently negative and positive whenever $x^0$ is sufficiently positive, then the mean value theorem asserts that a zero of the function $s \mapsto \sum_{k=1}^K \mu_F(S_k(s) )$ can be found.  And if also $F(x, \cdot)$ is monotone as a function of $x^0$, then this function will have non-zero derivative.

An application of the Main Theorem, and indeed an inspiration for it, is the earlier work by Kapouleas on slowly rotating assemblies of water droplets \cite{kapouleas8}.  In this case, the prescribed mean curvature  function $F : \R^{n+1} \times T \R^{n+1} \rightarrow \R$ takes the form $F(p, N_{\Sigma_r}(p) ) := C(\omega) ( p^0 )^2$ where $p := (p^0, p^1, \ldots p^n)$ and $C(\omega)$ depends on the angular velocity $\omega$.  The prescribed mean curvature equation now approximates the effect of centrifugal force on the surface $\Sigma_r$ when $\omega$ is small.  One of the assemblies of water droplets that Kapouleas constructs is exactly as described in the Main Theorem.  (He constructs many other, more complex, and less symmetrical assemblies as well.)

Another application of the Main Theorem is for understanding the possible shapes an electrically charged soap film can adopt in the presence of a weak, axially symmetric electric field.  In this case, the equation satisfied by the surface adopted by the soap film is exactly \eqref{eqn:pmc}, where the prescribed mean curvature function $F : \R^{n+1} \times T \R^{n+1} \rightarrow \R$ takes the form $F(p, N_{\Sigma_r}(p) ) := -  C \langle \nabla \phi(p) ,  N_{\Sigma_r}(p) \rangle$ while $\phi: \R^{n+1} \rightarrow \R$ is the electric potential and $C$ is a constant.  We can see why this is so by writing the total energy of the soap film as the sum of a surface area term and a term proportional to the surface integral of $\phi$, and then computing the Euler-Lagrange equation for the variation of this energy subject to the constraint that the volume enclosed by the surface remains constant.   If we now assume that $\phi$ is such that the existence conditions of the Main Theorem hold, then the Main Theorem asserts that $K$ spherical, electrically charged soap films connected by small catenoidal necks can be held in equilibrium at special points in space by the electric field.

\section{The Approximate Solution}

To construct an approximate solution for the Main Theorem, we use essentially exactly the same procedure as in \cite[\S 3.1]{memazzeo}.  This will be outlined here very briefly for the convenience of the reader. The presentation is given for the dimension $n=2$ for simplicity; everything that follows can be easily adapted to the $(n+1)$-dimensional setting.

Endow $\R^{n+1}$ with coordinates $(x^0, x^1, \ldots, x^n)$ and let $\gamma$ be the arc-length parametrization of the $x^0$-axis with $\gamma(0) = (0, 0, \ldots, 0)$.  We will construct an approximate solution for the Main Theorem out of $K$ spheres of radius one as follows.  Choose a localization parameter $s \in \R$ and small separation parameters $\sigma_1, \ldots, \sigma_{K-1} \in \R_+$.  Define $s_1 : = s$ and $s_k := s + 2(k-1) + \sum_{l=1}^{k-1} \sigma_l$ for $k = 2, \ldots K$ and set $p_k := \gamma(s_k)$ and $p_k^{\pm} := \gamma(s_k \pm 1)$.  Define the spheres $S_k := \partial B_1( p_k )$.  These spheres will now be joined together according to the following three steps.  

\paragraph*{Step 1.}  The first step is to replace each $S_k$ with the surface $\tilde S_k$ obtained by taking the normal graph of a specially chosen function $G_k$ over $S_k  \setminus [ B_{\rho_k} (p_k^+) \cup B_ {\rho_k} ( p_k^-) ]$ where $\rho_k \in (0,1)$ is a small radius as yet to be determined.  The functions we use for this purpose satisfy the equations
\begin{itemize}
	
	\item $\mathcal L (G_k) = \eps_k^+ \delta(p_k^+) +  \eps_k^- \delta(p_k^-) + A_k J_k$ if $k = 2, \ldots K-1$
	
	\item $\mathcal L (G_1) = \eps_1^+ \delta(p_1^+) +   A_1 J_1$ if $k=1$
	
	\item $\mathcal L (G_K) = \eps_K^- \delta(p_K^-) + A_K J_K$ if $k = K$
	
\end{itemize} 
where $\mathcal L := \Delta_{\Sph^2} + 2$ is the linearized mean curvature operator of the unit sphere, the small scale parameters $\eps_k^\pm$ are yet to be determined and $\delta(q)$ is the Dirac $\delta$-function centered at $q$, while $J_k := \langle \frac{\partial}{\partial x^0}, N_{S_k} \rangle$ is the sole $G$-invariant function in the kernel of $\mathcal L$ normalized to have unit $L^2$-norm, and $A_k$ is chosen to ensure $L^2$-orthogonality to $J_k$.  Of course $J_k = x^0 \big|_{S_k}$, the restriction of the $x^0$ coordinate function to $S_k$.

\paragraph*{Step 2.} Let $W$ be the catenoid, i.e.~the unique complete minimal surface of revolution whose axis of symmetry is $\gamma$ and whose waist lies in the $(x^1, x^2)$-plane.  The next step is to find the truncated and re-scaled catenoidal neck of the form $W_k := B_{\rho_k'}(p_k^\flat) \cap \big[  \eps_k W + p_k^\flat + (\delta_k, 0, 0)\big]$ that fits optimally in the space  between $\tilde S_k$ and $\tilde S_{k+1}$ for $k = 2, \ldots, K-1$.  Here $\eps_k >0$ is a small scale parameter and $p_k^\flat$ is a point between $p_k^+$ and $p_{k+1}^-$ that are determined by the optimal fitting procedure while $\delta_k$ is a small displacement parameter that takes $W_k$ away from its optimal location and $\rho_k'$ is a small radius as yet to be determined.  The optimal fit is obtained by matching the asymptotic expansions of the functions giving $\tilde S_k \cap B_{\rho_k'}(p_k^\flat)$ and $\tilde S_{k+1} \cap B_{\rho_k'}(p_k^\flat)$ and $W_k$ as graphs over the translate of the $(x^1, x^2)$-plane passing through $p_k^\flat$ exactly as in \cite[\S 3.1]{memazzeo}.  One particularly important outcome of the matching is that $\eps_k$ from the previous step, as well as $ \eps_k^\pm$ and $p_k^\flat$ are all uniquely determined by $\sigma_k$. In fact, an invertible relationship of the form $\sigma_k := \Lambda_k(\eps_k)$ holds, with $\Lambda_k(\eps_k) = \mathcal O(\eps_k| \log( \eps_k)| )$. Finally, we find that we must choose $\rho_k, \rho_k' = \mathcal O(\eps_k^{3/4})$ to ensure the optimal fit between the necks and the perturbed spheres.

\paragraph*{Step 3.} The final step is to use cut-off functions to smoothly glue the neck $W_k(\sigma_k)$ into the space between $\tilde S_k$ and $\tilde S_{k+1}$.  The interpolating region is the annulus $B_{\rho_k'}(p_k^\flat) \setminus B_{\rho_k'/2}(p_k^\flat)$.

\bigskip
\medskip

\noindent In this way we obtain a family of surfaces depending on the $\sigma$, $\delta$ and $s$  parameters.   
\begin{defn}
	Let $K$ be given.  The approximate solution with parameters $\sigma := \{ \sigma_1, \ldots, \sigma_{K-1} \}$ and $\delta := \{ \delta_1, \ldots, \delta_{K-1}\}$ and $s$ is the surface given by
\begin{align*}
	\apsol &:= \left[ \bigcup_{k=1}^K \tilde S_k \right] \cup \left[ \bigcup_{k=1}^{K-1} \tilde W_k  \right]
\end{align*}
\end{defn}

\section{Solving the Projected Problem}

We now proceed to solve the equation \eqref{eqn:pmc} up to a finite-dimensional error term by perturbing the approximate solution constructed in the previous section.  The required analysis is in most respects identical to or less involved than the analysis found in \cite[\S 4 - \S 6]{memazzeo} and will thus again only be abbreviated here for the sake of the reader.  The outcome will be a surface $\Sigma_r^\sharp (\sigma, \delta, s)$ satisfying $H[ \Sigma_r^\sharp (\sigma, \delta, s)] - 2 - r^2 F \big|_{ \Sigma_r^\sharp (\sigma, \delta, s)} \in \tilde{ \mathcal W}$, where $\tilde{ \mathcal W}$ is a finite-dimensional space of functions that will be defined precisely below.  It arises because the linearized mean curvature operator, which governs the solvability of \eqref{eqn:tosolve}, possesses a finite-dimensional \emph{approximate kernel} consisting of eigenfunctions corresponding to small eigenvalues.   These small eigenvalues make it impossible to implement a convergent algorithm for prescribing the components of the mean curvature of the approximate solution lying in $\tilde {\mathcal W}$.

\paragraph*{Function spaces.} We first define the weighted H\"older spaces in which the analysis will be carried out.  These are essentially the same weighted spaces as in \cite[\S4]{memazzeo}, namely the spaces $C^{k,\alpha}_\nu (\apsol)$ consisting of all $C^{k,\alpha}_{\mathit{loc}}$ functions on $\apsol $ where the rate of growth in the neck regions of $\apsol$ is controlled by the parameter $\nu$.  Choose some fixed, small $0< R \ll 1$ and define a weight function $\zeta_{r} : \apsol\rightarrow \R$ as
\begin{equation*}
	\zeta_{r}(p) := 
	\begin{cases}
		\| x \| &\quad p = (x^0, x) \in  \bar B_{R \! /2}(p_k^\flat)  \:\: \mbox{for some} \:\:  k \\
		\mathit{Interpolation} &\quad p  \in  \bar B_R(p_k^\flat) \setminus B_{R \! /2}(p_k^\flat)  \:\: \mbox{for some} \:\:  k  \\
		1 &\quad \mbox{elsewhere} 
	\end{cases}
\end{equation*}
where the interpolation is such that $\zeta_r$ is smooth and monotone in the region of interpolation, has appropriately bounded derivatives, and is $G$-invariant.  Now for any open set $\mathcal U \subseteq \apsol$, define
\begin{equation*}
	| f |_{C^{k, \alpha}_\nu (\mathcal U)} :=   \sum_{i=0}^{k}  | \zeta_r^{i-\nu} \nabla^i f |_{0, \, \mathcal U} + [ \zeta_r^{k + \alpha -\nu} \nabla^k f ]_{\alpha, \,\mathcal U} 
\end{equation*}  
where $| \cdot |_{0, \mathcal U}$ is the supremum norm on $\mathcal U$ and $[ \cdot ]_{\alpha, \mathcal U}$ is the $\alpha$-H\"older coefficient on $\mathcal U$.  This is the norm that will be used in the $C^{k,\alpha}_\nu(\apsol)$ spaces.

\paragraph{The equation to solve.}  Let $\mu : \ctan (\apsol) \rightarrow \mathit{Emb}( \apsol, \R^{n+1})$ be the exponential map of $\apsol$ in the direction of the unit normal vector field of $\apsol$.  Hence $ \mu_{f} \big(\apsol \big)$  is the normal deformation of $\apsol$ generated by $f \in \ctan (\apsol)$.  The equation
\begin{equation}
	\label{eqn:tosolve}
	H \big[ \mu_{f} \big(\apsol \big) \big]  = 2 + r^2\, F \circ \left( \mu_f \times N_{\mu_{f}(\apsol)} \right)
\end{equation}
selects $f \in \ctan (\apsol)$ so that $\mu_{f}( \apsol)$ satisfies equation \eqref{eqn:pmc}.   In addition, the function $f$ will be assumed $G$-invariant.  Define the operator 
\begin{gather*}
	\Phi_{r, s, \sigma, \delta} : \ctan(\apsol) \rightarrow \coan(\apsol) \\
	\Phi_{r, s, \sigma, \delta} (f) := H \big[ \mu_{f} \big(\apsol \big) \big]  - 2 - r^2\, \mathcal F (f) 
\end{gather*}
where $\mathcal F(f) :=  F \circ \left( \mu_f \times N_{\mu_{f}(\apsol)} \right)$.  The linearization of $\difop$ at zero is given by
$$\mathcal L := \Dif \difop (0) = \Delta + \|B \|^2 + r^2\big( \Dif_1 F ( \mu_0 , N_{\apsol})  \cdot f N_{\apsol} - \Dif_2 F ( \mu_0, N_{\apsol})  \cdot \nabla f \big) $$
where $\Dif_1 F$ and $\Dif_2 F$ are the derivatives of $F$ in its first and second slots and $B := B[\apsol]$ is the second fundamental form of $\apsol$.  On the $k^{\mathit{th}}$ spherical part of $\apsol$, the operator $\mathcal L$ is a small perturbation of $\mathcal L_k := \Delta_{S_k} + 2$ which is the linearized mean curvature operator of the sphere $S_k$.  Let $J_k$ once again be the $G$-invariant function in its kernel and define the space 
$$\tilde {\mathcal W} := \mathrm{span} \{  \chi_{\mathit{ext}, k} J_k \,  , \,  \chi_{\mathit{ext}, k} \mathcal L_k (\eta_k ) : k = 1,  \ldots, K-1 \} \cup \{ \chi_{\mathit{ext}, K} J_K  \} \, . $$
Here $\chi_{\mathit{ext}, k}$ is a smooth cut-off function supported on $\tilde S_k$ and $\eta_k$ is a smooth cut-off function supported on the transition region between the $k^{\mathit{th}}$ neck and $\tilde S_k$ with the property that the support of $\nabla \eta_k$ and $\nabla \chi_{\mathit{ext}, k} $ do not overlap.  

We now prove the following theorem. Let   $\eps := \max \{ \eps_1, \ldots, \eps_{K-1} \}$ and $\delta := \max \{ \delta_1, \ldots , \delta_{K-1} \}$ and we will assume that $ \eps = \mathcal O( r^2 )$ and $\delta = \mathcal O(r)$, which will be justified \emph{a posteriori}.

\begin{thm}
	\label{prop:soluptocoker}
	If $r$ is sufficiently small, then there exists  $f:= f_r(\sigma,\delta,s)  \in \ctan(\apsol)$ with $\nu \in (1,2)$ so that 
	\begin{equation}
		\label{eqn:uptofindim}
		\difop(f) \in \tilde{\mathcal W} \, . 
	\end{equation}
	The estimate $|f|_{\ctan} \leq C r^2$ holds for the function $f$, where the constant $C$ is independent of $r$.  Finally, the mapping $(\sigma, \delta,s ) \mapsto f_r (\sigma, \delta, s)$ is smooth in the sense of Banach spaces.
\end{thm}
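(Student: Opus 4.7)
The plan is to find $f$ by a Banach fixed-point argument in the weighted Hölder space $\ctan(\apsol)$, working modulo the finite-dimensional space $\tilde{\mathcal W}$. Writing $\difop(f) = \difop(0) + \mathcal L f + Q(f)$ where $Q$ collects the super-linear terms, the condition $\difop(f) \in \tilde{\mathcal W}$ is equivalent to $\mathcal L f \equiv -\difop(0) - Q(f) \pmod{\tilde{\mathcal W}}$. Assuming one can construct a bounded operator $\mathcal G : \coan(\apsol) \to \ctan(\apsol)$ satisfying $\mathcal L \mathcal G(h) - h \in \tilde{\mathcal W}$ for every $h$, with operator norm bounded uniformly in $r$, the problem reduces to the fixed-point equation $f = -\mathcal G\bigl( \difop(0) + Q(f) \bigr)$, to be solved by contraction on a ball of radius $\mathcal O(r^2)$.

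First I would estimate the initial error $\difop(0) = H[\apsol] - 2 - r^2 \mathcal F(0)$ in the $\coan$ norm. The careful choice of the $G_k$ in Step 1 and of the catenoidal matching parameters in Step 2 ensure that the geometric contribution $H[\apsol] - 2$ is concentrated in the necks and transition regions, where the weight $\zeta_r^{2-\nu}$ with $\nu \in (1,2)$ suppresses it to a positive power of $\eps$. Adding the uniformly bounded $r^2$-scaled prescribed curvature term and invoking the assumption $\eps = \mathcal O(r^2)$ yields $|\difop(0)|_{\coan} \leq C r^2$.

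The main obstacle is the construction of $\mathcal G$. On each spherical piece, $\mathcal L$ is a small perturbation of $\mathcal L_k = \Delta_{S_k} + 2$, whose $G$-invariant kernel is spanned by $J_k$; on each neck, after rescaling by $\eps_k$, $\mathcal L$ approaches the Jacobi operator of the catenoid, which is Fredholm on the weighted spaces provided $\nu$ avoids the indicial roots at the waist—this is precisely why we take $\nu \in (1,2)$. The elements of $\tilde{\mathcal W}$ are designed to span the relevant approximate small-eigenvalue subspace on each piece: $\chi_{\mathit{ext}, k} J_k$ accounts for the rotational deficiency on the $k$-th sphere, while $\chi_{\mathit{ext}, k} \mathcal L_k(\eta_k)$ accounts for the deficiency inherited from the neck's translation Jacobi field via the cut-off $\eta_k$. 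Following the strategy of \cite[\S 5]{memazzeo}, one first builds right inverses on each piece separately, patches them by cut-off functions into a global parametrix, and corrects the resulting commutator error by a Neumann series; the $r^2$-order perturbation of $\mathcal L$ coming from the derivatives of $F$ is absorbed in the same Neumann step. The outcome is the required uniform bound $\|\mathcal G\| \leq C$.

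The nonlinear remainder $Q$ satisfies the standard quadratic estimate
\[
|Q(f_1) - Q(f_2)|_{\coan} \leq C \bigl( |f_1|_{\ctan} + |f_2|_{\ctan} \bigr) |f_1 - f_2|_{\ctan}
\]
for $|f_i|_{\ctan}$ small, which follows from the pointwise Taylor expansion of the mean curvature operator and of $\mathcal F$ together with the algebra property of the weighted Hölder norms when $\nu > 0$. Combined with the $\mathcal O(r^2)$ initial error and the uniform bound on $\mathcal G$, this shows that $f \mapsto -\mathcal G(\difop(0) + Q(f))$ is a contraction on the closed ball of radius $2 C \|\mathcal G\| r^2$ in $\ctan(\apsol)$ once $r$ is small, producing the required $f_r(\sigma, \delta, s)$ with the stated estimate. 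The smooth dependence on $(\sigma, \delta, s)$ follows from the smoothness of $\difop$ in these parameters and the implicit function theorem in Banach spaces.
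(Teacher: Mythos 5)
Your proposal is correct and follows essentially the same route as the paper: an estimate $|\difop(0)|_{\coan} \leq C r^2$, a uniformly bounded parametrix modulo $\tilde{\mathcal W}$ obtained by patching spherical and catenoidal inverses and iterating (the paper's $\mathcal R$ with $\mathcal L \circ \mathcal R = \mathit{id} + \mathcal E$, $\mathcal E$ valued in $\tilde{\mathcal W}$), a quadratic estimate for the remainder, and a contraction on a ball of radius $\mathcal O(r^2)$, with smooth parameter dependence from the fixed-point/implicit-function argument. The only cosmetic difference is that you contract directly in $f \in \ctan(\apsol)$, whereas the paper uses the Ansatz $f = \mathcal R(w - E)$ and contracts in $w \in \coan(\apsol)$; otherwise the level of detail matches what the paper itself delegates to \cite{memazzeo}.
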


\begin{proof}

As in \cite{memazzeo}, we will use a fixed-point argument to solve the equation $\Phi_{r, \sigma, \delta, s} (f) \in \tilde{\mathcal W}$ for a function $f \in C^{2, \alpha}_\nu (\apsol) $ with $\nu \in (1,2)$.  The fixed-point argument follows from three steps: an estimate of the size of $\Phi_{r, s, \sigma, \delta, s}(0)$; the construction of a bounded parametrix $\mathcal R$ satisfying $\mathcal L \circ \mathcal R = \mathit{id} + \mathcal E$ where $\mathcal E : \coan(\apsol) \rightarrow \tilde{\mathcal W}$; and an estimate of the non-linear part of the operator $\difop$.  Each of these steps is given in great detail in \cite{memazzeo}.  Thus all that is needed here is to point out how the analysis of \cite{memazzeo} applies to the present situation. 

\paragraph*{Step 1.}  We begin with the estimate of $\left|  \difop (0)  \right|_{\coan} $, which is the amount that the approximate solution $\apsol$ deviates from being an actual solution of equation \eqref{eqn:tosolve}.  
This is done by adapting \cite[Prop.~13]{memazzeo}. In fact, by combining Steps 1, 2 and 4 of the estimate of $H \big[\apsol \big] - 2$ in the $C^{0,\alpha}_{\nu - 2}$ for $\nu \in (1,2)$ found in \cite[Prop.~13]{memazzeo}, with a straightforward estimate for the $\coan$ norm of the $r^2 \mathcal F$ term, we find that
$$\left|  \difop (0)  \right|_{\coan} \leq C \max \big\{ r^2  , \, \eps^{3/2 - 3\nu/4} , \delta  \eps^{1 - 3\nu/4} \big\} \leq  C r^2 $$
for some constant $C$ independent of $r$.
	
\paragraph{Step 2.}  We now find a parametrix $\mathcal R  : \coan(\apsol) \rightarrow \ctan(\apsol)$ satisfying $\mathcal L \circ \mathcal R = \mathit{id} + \mathcal E$ where $\mathcal E : \coan(\apsol) \rightarrow \tilde{\mathcal W}$.  As in \cite[Prop.~15]{memazzeo}, this is done by first constructing an approximate parametrix by patching together parametrices for the linearized mean curvature operator of each sphere with parametrices for the linearized mean curvature operator of each neck; and then iterating to produce an exact parametrix plus an error term in $\tilde{\mathcal W}$ in the limit.  The difference here is that the terms coming from the non-Euclidean background metric in \cite[Prop.~15]{memazzeo} must be replaced by the  $r^2 \mathcal F$ term.  The same result holds because this term can easily be shown to satisfy the right estimates.  In fact, $\mathcal R$ and $\mathcal E$ satisfy the estimate $| \mathcal R(w) |_{\ctan} + |\mathcal E(w)|_{C^{2,\alpha}_0} \leq C | w |_{\coan}$ for all $w \in \coan(\apsol)$, where $C$ is a constant independent of $r$.

\paragraph*{Step 3.} We define the quadratic and higher remainder term of the operator $\difop$ as
\begin{gather*}
	\mathcal Q : \ctan (\apsol) \rightarrow \coan (\apsol) \\
	\mathcal Q(f) := \difop(f) - \difop(0) - \mathcal L(f)
\end{gather*}
The estimates for the $C^{0,\alpha}_{\nu}$ norm of $\mathcal Q $ can be found exactly as in  \cite[Prop.~18]{memazzeo} with the terms coming from the non-Euclidean background metric replaced by the  $r^2 \mathcal F$ term.  Then there exists $M>0$ so that if  $f_1, f_2 \in \ctan(\apsol)$ for $\nu \in (1, 2)$ and satisfying $|f_1|_{C^{2,\alpha}_\nu} + |f_2|_{C^{2,\alpha}_\nu} \leq M$, then
	$$| \mathcal Q( f_1) - \mathcal Q( f_2) |_{\coan} \leq C  |f_1 - f_2 |_{\ctan} \max \big\{ |f_1|_{\ctan}, |f_2|_{\ctan} \big\}$$
where $C$ is a constant independent of $r$.  Once again, this works because the $r^2 \mathcal F$ term can easily be shown to satisfy the right estimates.

\paragraph*{Step 4.} We can now solve the CMC equation up to a finite-dimensional error term by implementing a fixed-point argument based on the parametrix constructed in Step 2 as well as the estimates we have computed so far.   Let $E :=  \difop(0) $ and use the \emph{Ansatz} $f := \mathcal R(w-E)$ to convert the equation $\difop(f) \in \tilde{\mathcal W}$ into the fixed point problem $w - \mathcal N_{r, \sigma, \delta, s}(w)  \in \tilde{\mathcal W}$ where
\begin{gather*}
	\mathcal N_{r, \sigma, \delta, s} : \coan(\apsol) \rightarrow \coan (\apsol) \\
	\mathcal N_{r, \sigma, \delta, s} (w) := - \mathcal Q \circ \mathcal R(w-E) + r^2 \mathcal F \circ \mathcal R(w-E) 
\end{gather*}
The estimates that have been established up to now give us the estimate 
\begin{align*}
	|\mathcal N_r(w_1) - \mathcal N_r(w_2) |_{\coan} &\leq C r^2 |w_1 - w_2 |_{\coan} 
\end{align*}
for $w$ belonging to a ball of radius $\mathcal O(r^2)$ about zero in $\coan(\apsol)$, where $C$ is independent of $r$.  Hence $\mathcal N_r$ is a contraction mapping on this ball if $r$ is sufficiently small, and  a solution of \eqref{eqn:uptofindim} satisfying the desired estimate can be found.  The smooth dependence of this solution on the parameters $(\sigma, \delta, s)$ is a consequence of the fixed-point process.
\end{proof}

\section{Force Balancing Arguments and the Proof of the Main Theorem}

When $r$ is sufficiently small, we have now found a function $f_r(\sigma, \delta) \in C^{2, \alpha}_\ast(\apsol)$ for each $(\sigma, \delta, s)$ so that 
$$H \big[ \mu_{f_r(\sigma, \delta)} \big(\apsol \big) \big]  - 2 - r^2 \mathcal F( f_r (\sigma, \delta, s)) = \mathcal E_r (\sigma, \delta, s) $$
where $\mathcal E_r (\sigma, \delta, s )$ is an error term belonging to the finite-dimensional space 
$\tilde{\mathcal W}$ depending on the free parameters $(\sigma, \delta, s)$.  The corresponding surface that satisfies the prescribed mean curvature condition up to finite-dimensional error is $\Sigma_r^\sharp( \sigma, \delta, s) := \mu_{f_r(\sigma, \delta, s)} ( \apsol)$.  

To complete the proof of the main theorem, we must show that it is possible to find a value of $(\sigma, \delta, s)$  for which these error terms vanish identically.  As in \cite[\S 7.2]{memazzeo}, we consider the \emph{balancing map} $B_r : \R^{2K-1} \rightarrow \R^{2K-1}$ defined by
\begin{equation}
	\label{eqn:balmapdef}
	B_r(\sigma, \delta, s) := \Big( \pi_1 \big( \mathcal E_r (\sigma, \delta, s)  \big), \ldots, \pi_{2K-1} \big( \mathcal E_r (\sigma, \delta, s)  \big) \Big)
\end{equation}
where $\pi_{2k + 1} : \tilde{\mathcal W} \rightarrow \R$  and $\pi_{2k} :  \tilde{\mathcal W} \rightarrow \R$ are the $L^2$ projection operators given by
$$\pi_{2k} (e) := \int_{\Sigma^\sharp_r(\sigma, \delta, s)} e \cdot \chi_{\mathit{neck}, k} I_k 
\qquad \mbox{and} \qquad 
\pi_{2k+1} (e) := \int_{\Sigma^\sharp_r(\sigma, \delta, s)} e \cdot \chi_{\mathit{ext}, k}^{\, \prime} J_k $$
where  $\chi_{\mathit{ext}, k}^{\, \prime}$ is a cut-off function supported on the $k^{\mathit{th}}$ spherical region,  $\chi_{\mathit{neck}, k}$ is a cut-off function supported on the $k^{\mathit{th}}$ neck and transition region, and $I_k$ the Jacobi field of the neck coming from translation along the neck axis.  This is an odd, bounded function with respect to the centre of the neck.  Note that $B_r$ is a smooth map between finite-dimensional vector spaces by virtue of the fact that the dependence of the solution $f_r(\sigma, \delta,s )$ on $(\sigma, \delta,s)$ is smooth and the mean curvature operator is a smooth map of the Banach spaces upon which it is defined.  The following lemma proves that $\pi(e) = 0$ implies that $e = 0$, and its proof is a straightforward computation.  (In order to make this work out, we must choose the cut-off functions properly: we must have overlap between the supports of $\chi_{\mathit{neck}, k}$, $\chi_{\mathit{ext}, k}$ and $\chi_{\mathit{ext}, k+1}$ and no overlap between the supports of  $\chi_{\mathit{ext}, k}^{\, \prime}$ and $\eta_k$.)

\begin{lemma}
	\label{lemma:projop}
	Choose $e \in \tilde{\mathcal W}$ as $e = \sum_{k=1}^K a_k \chi_{\mathit{ext}, k} J_k + \sum_{k=1}^{K-1} b_k \chi_{\mathit{ext}, k} \mathcal L_k (\eta_k )$ for $a_k, b_k \in \R$.  Then 
	\begin{align*}
		\pi_{2k} (e) &= C_1 b_k + C_1' ( \eps_{k+1}^{3/2} a_{k+1} - \eps_k^{3/2} a_k  ) \\
	\pi_{2k+1} (e) &= C_2  a_k 
	\end{align*}
	where $C_1, C_1', C_2$ are independent of $r$ and $(\sigma, \delta, s)$.
\end{lemma}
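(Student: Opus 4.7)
The approach is a bookkeeping argument driven by the support structure of the cut-off functions, combined with the small-parameter asymptotics recorded in Section 2. I would show that most pairings of the summands of $e$ against the test cut-offs vanish by disjointness of supports, leaving a handful of integrals whose leading-order values produce either a universal constant or a factor of $\eps_k^{3/2}$ extracted from the area of a neck--sphere transition annulus.

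For $\pi_{2k+1}(e)$, the cut-off $\chi_{\mathit{ext},k}^{\,\prime}$ is supported on the $k^{\mathit{th}}$ spherical region, so only the $j = k$ summands of $e$ can contribute. The stipulated disjointness of $\mathrm{supp}(\chi_{\mathit{ext},k}^{\,\prime})$ and $\mathrm{supp}(\eta_k) \supseteq \mathrm{supp}\bigl(\mathcal L_k \eta_k\bigr)$ kills the $b_k$ piece, leaving
$$\pi_{2k+1}(e) = a_k \int_{\Sigma^\sharp_r} \chi_{\mathit{ext},k}^{\,\prime}\, \chi_{\mathit{ext},k}\, J_k^2 \, \dif \mathrm{Vol} \, .$$
This integral is a universal positive constant $C_2$ since the cut-offs and the reference sphere $S_k$ are defined independently of the small parameters, with any dependence on the $\mathcal O(r^2)$ normal deformation and on $(\sigma,\delta,s)$ entering only at subleading order, which I would absorb into $C_2$.

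For $\pi_{2k}(e)$, the cut-off $\chi_{\mathit{neck},k}$ lives on the $k^{\mathit{th}}$ neck together with its two transitions onto $S_k$ and $S_{k+1}$, so only the terms of $e$ with indices $k$ and $k+1$ can contribute. The $b_k$ piece $b_k \int \chi_{\mathit{neck},k} \chi_{\mathit{ext},k} \mathcal L_k(\eta_k) I_k$ is supported near the contact point $p_k^+$ in a fixed-geometry configuration and yields a universal constant $C_1$. For the $a_k$ and $a_{k+1}$ contributions I would work in catenoidal coordinates on the $k^{\mathit{th}}$ neck: since $I_k$ is the normal component of $\partial/\partial x^0$ along the neck, it is odd in the axial variable and asymptotes to $\mp 1$ at the two ends, values essentially attained throughout the transition annuli. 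On these annuli $J_k = x^0|_{S_k} \approx \pm 1$, and each annulus has area $\mathcal O(\eps_k^{3/2})$ because $\rho_k' = \mathcal O(\eps_k^{3/4})$, as recorded in Step 2 of Section 2. The oddness of $I_k$ forces the two spherical contributions to carry opposite signs with the same proportionality constant $C_1'$, producing $C_1'\bigl(\eps_{k+1}^{3/2} a_{k+1} - \eps_k^{3/2} a_k\bigr)$.

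The main technical obstacle is extracting the precise $\eps_k^{3/2}$ scaling in the cross terms, which requires a careful leading-order expansion of $J_k$, $I_k$, and the induced volume form of $\apsol$ across the neck--sphere transition region, using the matching data from Section 2. Once this scaling is in hand, the independence of $C_1$, $C_1'$, $C_2$ from $r$ and $(\sigma,\delta,s)$ is immediate: the cut-offs and transition-region widths were constructed uniformly in the parameters, so the only parameter dependence in the integrals is higher-order and is absorbed into the leading constants.
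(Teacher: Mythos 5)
Your overall strategy is the one the paper intends: the paper itself records only the support conditions (overlap of $\chi_{\mathit{neck},k}$ with $\chi_{\mathit{ext},k}$ and $\chi_{\mathit{ext},k+1}$, no overlap of $\chi_{\mathit{ext},k}^{\,\prime}$ with $\eta_k$) and calls the rest a straightforward computation, and your treatment of $\pi_{2k+1}$ is exactly that computation: the non-overlap condition kills the $b_k$ summand and leaves $a_k\int\chi_{\mathit{ext},k}^{\,\prime}\chi_{\mathit{ext},k}J_k^2$, which is bounded away from zero uniformly in the parameters.

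However, your derivation of the second identity does not actually produce the formula you claim. Two concrete problems. First, the scaling: both transition annuli on which $\chi_{\mathit{neck},k}$ meets $\chi_{\mathit{ext},k}$ and $\chi_{\mathit{ext},k+1}$ belong to the \emph{same} ($k^{\mathit{th}}$) neck and have radius $\rho_k'=\mathcal O(\eps_k^{3/4})$, so the area argument you give yields a factor $\eps_k^{3/2}$ on \emph{both} the $a_k$ and the $a_{k+1}$ contributions; the factor $\eps_{k+1}^{3/2}$ on $a_{k+1}$ is asserted to match the statement, not derived, and obtaining it requires tracing the actual cut-off conventions and the expansions of $I_k$, $J_{k+1}$ and the induced volume form across the gluing region rather than a pure area count. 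Second, the sign: oddness of $I_k$ gives $I_k\approx\pm1$ at the two ends of the neck, but $J_k\approx+1$ near $p_k^+$ while $J_{k+1}\approx-1$ near $p_{k+1}^-$, so the two sign changes compensate and the products $J_kI_k$ and $J_{k+1}I_k$ have the \emph{same} sign at the two junctions; the relative minus sign in $\eps_{k+1}^{3/2}a_{k+1}-\eps_k^{3/2}a_k$ therefore cannot follow from oddness of $I_k$ alone. Relatedly, your claim that the $b_k$ integral is a universal constant because it sits in a ``fixed-geometry configuration'' is not right as stated: the transition annulus shrinks like $\eps_k^{3/4}$, and the $\eps$-independence of $C_1$ comes from the exact compensation between $|\mathcal L_k(\eta_k)|\sim(\rho_k')^{-2}$ and the area $\sim(\rho_k')^{2}$ together with $I_k$ being nearly constant there --- precisely the leading-order computation you deferred. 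What your argument does correctly capture is the triangular structure ($\pi_{2k+1}$ sees only $a_k$; $\pi_{2k}$ sees $b_k$ plus $\mathcal O(\eps^{3/2})$ multiples of the $a$'s), which is all that is used afterwards, but as a proof of the lemma as stated the quantitative content of the $\pi_{2k}$ formula is missing.
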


We must now show that $B_r (\sigma, \delta, s)$ can be controlled by the initial geometry of $\apsol$, at least to lowest order in $r$.  The calculations are similar to those found in \cite[\S 7.2]{memazzeo} except with the contributions from the ambient background geometry replaced by a contribution from the prescribed mean curvature in the form of the $F$-moments of the spheres making up $\apsol$.

The highest-order part of $\mathcal E_r (\sigma, \delta, s)$ involves the $F$-moments of the spherical constituents $S_k$ of $\apsol$ as follows.  Set $\mu_k (\sigma, s) := \mu_F (S_k)$ --- this depends on $s$ and $\sigma_1, \ldots, \sigma_k$ because the location of the centre of $S_k$ is determined by these parameters. Let us continue to assume that $\eps_k = \mathcal O(r^2)$ and $\delta_k = \mathcal O(r)$ for each $k$.  This will be justified shortly.

\begin{lemma}
	\label{prop:integralexp}
	The quantity $\mathcal E_r (\sigma, \delta, s)$ satisfies the formul\ae\
	\begin{subequations}
	\begin{align}	
		\pi_{2k} \big( \mathcal E_r (\sigma, \delta, s)  \big)  &=  C_1 \delta_k \eps_k^{3/2} + \mathcal O(r^{2 + 2 \nu}) 	\label{eqn:balform1} \\	
		\pi_{2k+1} \big( \mathcal E_r (\sigma, \delta, s)  \big)  &=   
		\begin{cases}
			C_2  \eps_{1}  - r^2 \mu_{1} (\sigma, s) + \mathcal O \big(r^ {4} \big)&\qquad k=0 \\
			C_2 \big( \eps_{k+1} - \eps_{k} \big)  - r^2 \mu_{k+1} (\sigma, s) + \mathcal O \big(r^ {4} \big) &\qquad 0 < k < K-1 \\
			- C_2  \eps_{K}   - r^2 \mu_{K} (\sigma, s) + \mathcal O \big(r^{4} \big)&\qquad k = K-1
		\end{cases} \label{eqn:balform2}
	\end{align}
	\end{subequations}
where $C_1, C_2$ are constants independent of $r, \sigma, \delta, s$.
\end{lemma}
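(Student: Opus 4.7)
My strategy is to evaluate the two projections by flux-type identities arising from the translation invariance of the Euclidean metric in the $x^0$-direction, which plays the role of the ``friction term'' that \cite[\S 7.2]{memazzeo} obtains from the non-Euclidean scalar curvature. Throughout, I substitute $\mathcal{E}_r = H[\Sigma_r^\sharp] - 2 - r^2\mathcal{F}(f_r)$ in the projection integrals and exploit the bound $|f_r|_{\ctan} \leq Cr^2$ from Theorem~\ref{prop:soluptocoker} to replace $\Sigma_r^\sharp = \mu_{f_r}(\apsol)$ by the unperturbed $\apsol$, and in turn the spherical pieces of $\apsol$ by the exact unit spheres $S_{k+1}$, paying only quadratic errors in $f_r$.

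To compute $\pi_{2k+1}(\mathcal{E}_r)$, I separate the $H - 2$ and the $r^2\mathcal{F}$ contributions. For the latter, the support of $\chi_{\mathit{ext},k}^{\,\prime}$ covers $S_{k+1}$ except for two discs of radius $\mathcal{O}(\eps_k^{3/4}) = \mathcal{O}(r^{3/2})$ near the necks; substituting the unperturbed sphere and unit normal yields $-r^2 \int_{S_{k+1}} F(x, N_{S_{k+1}}) J_{k+1} \dif \mathrm{Vol}_{S_{k+1}} = -r^2 \mu_{k+1}(\sigma, s)$, with total remainder of order $\mathcal{O}(r^4)$ from the $\mathcal{O}(r^2)$ perturbation of the surface by $f_r$ and the two excised discs of area $\mathcal{O}(r^3)$. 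For the $H-2$ contribution I invoke the Euclidean identity
\[
\int_{\Sigma_r^\sharp} H\, \psi\, \langle \tfrac{\partial}{\partial x^0}, N\rangle \, \dif A = \int_{\Sigma_r^\sharp} \bigl\langle (\tfrac{\partial}{\partial x^0})^T, \nabla^{\Sigma_r^\sharp}\psi \bigr\rangle \, \dif A,
\]
valid on any closed hypersurface since $\tfrac{\partial}{\partial x^0}$ is a Killing field. With $\psi = \chi_{\mathit{ext},k}^{\,\prime}$ the right-hand side is supported in the narrow transition annuli adjacent to the two necks flanking $S_{k+1}$, where the matched-asymptotic data from the construction of \S 2 makes the flux integrals explicit: each neck of scale $\eps$ contributes linearly in $\eps$, and the two sides of $S_{k+1}$ contribute with opposite signs, giving $C_2(\eps_{k+1} - \eps_k)$. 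The boundary cases $k = 0$ and $k = K-1$ involve only one neck and yield the modified formul\ae.

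The computation of $\pi_{2k}(\mathcal{E}_r)$ is similar but localized on the $k$-th neck, where the test function $\chi_{\mathit{neck},k} I_k$ concentrates and $I_k$ is the catenoid translation Jacobi field (odd about the waist). The $r^2\mathcal{F}$ contribution is bounded by $r^2$ times the integral of a bounded integrand over a region of area $\mathcal{O}(\eps_k^2 |\log\eps_k|)$, and a direct estimate in the weighted norm with $\nu \in (1,2)$ gives the $\mathcal{O}(r^{2+2\nu})$ error. The $H-2$ contribution is evaluated by an analogous flux identity on the neck: the ``symmetric'' part of the integral vanishes by the odd symmetry of $I_k$, while the displacement $\delta_k$ of the truncated catenoid from its optimal position produces a non-vanishing term of size $C_1 \delta_k \eps_k^{3/2}$, where the $\eps_k^{3/2}$ factor combines the neck scale $\eps_k$ with the truncation radius $\rho_k = \mathcal{O}(\eps_k^{3/4})$ from Step 2 of \S 2.

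The main obstacle is the careful accounting of remainder terms at the advertised order. The delicate points are to verify that the cross-terms between the perturbation $f_r$ and $r^2\mathcal{F}$, as well as the mismatch between $J_k$ (defined intrinsically on $S_k$) and $\langle \tfrac{\partial}{\partial x^0}, N_{\Sigma_r^\sharp}\rangle$ on the actually perturbed surface, indeed contribute only at orders $\mathcal{O}(r^4)$ and $\mathcal{O}(r^{2+2\nu})$ respectively. Since these estimates rely only on the smoothness and boundedness of $F$, the computation transfers directly from \cite[\S 7.2]{memazzeo}, with the scalar-curvature integrals there replaced by the $F$-moments $\mu_k(\sigma, s)$ here.
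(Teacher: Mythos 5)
Your proposal is correct and follows essentially the same route as the paper: the paper likewise evaluates $\pi_{2k+1}$ via the first variation (balancing) formula for the translation field $\partial/\partial x^0$, localized to the support of the cut-off, obtaining the neck fluxes $C_2(\eps_{k+1}-\eps_k)$ plus the $F$-moment term $-r^2\mu_{k+1}$ with $\mathcal O(r^4)$ error, and evaluates $\pi_{2k}$ by noting the neck is minimal so the dominant contribution is $-2\int \chi_{\mathit{neck},k} I_k$, which by the odd symmetry of $I_k$ about the undisplaced neck yields $C_1\delta_k\eps_k^{3/2}+\mathcal O(r^{2+2\nu})$. Your cutoff-localized divergence identity is just the smooth-cutoff form of the paper's boundary-flux integral, so the two arguments are the same in substance.
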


\begin{proof}
Set $\Sigma_r^\sharp := \Sigma_r^\sharp (\sigma, \delta, s)$ and $\Sigma := \apsol$ for convenience.  Consider first equation \eqref{eqn:balform2} with $0 < k < K-1$.  By the first variation formula and estimates of the size of the perturbation generating $\Sigma_r^\sharp$ from $\apsol$, and calculating as in \cite[Prop.~27]{memazzeo}, we have
\begin{align*}
	\pi_{2k+1} \big( \mathcal E_r (\sigma, \delta, s)  \big)  &=   \int_{\Sigma_r^\sharp} \big( H [\Sigma_r^\sharp] - 2 - r^2 \mathcal F (f_r(\sigma, \delta, s) ) \big) \chi_{\mathit{ext}, k} J_k \\
	&= \int_{\partial \Sigma^\sharp \cap \mathit{supp}( \chi_{\mathit{ext}, k} ) } \Big\langle \frac{\partial}{\partial x^0} , \nu_k \Big\rangle - r^2 \int_{S_k}  F(x, N_{S_k}(x) ) ) \big) J_k + \mathcal O(r^4) \\
	&=  C_2 \big( \eps_{k+1} - \eps_{k} \big) - r^2 \mu_k(s, \sigma) + \mathcal O(r^4)
\end{align*}
where $\nu_k$ is the unit normal vector field of $\partial \Sigma^\sharp \cap \mathit{supp}( \chi_{\mathit{ext}, k} )$ in $\Sigma^\sharp$.

Consider now equation \eqref{eqn:balform2}.  In the neck we have $H[\apsol] = 0$.  Using similar estimates,
\begin{align*}
	\pi_{2k} \big( \mathcal E_r (\sigma, \delta, s)  \big)  &=   \int_{\Sigma_r^\sharp} \big( H [\Sigma_r^\sharp] - 2 - r^2 \mathcal F (f_r(\sigma, \delta, s) ) \big) \chi_{\mathit{neck}, k} I_k \\
	&= - 2  \int_{\Sigma \cap \mathit{supp}( \chi_{\mathit{neck}, k} ) } \chi_{\mathit{neck}, k} I_k + \mathcal O(r^{2 + 2 \nu}) \\
	&=  C_1 \delta_k \eps_k^{3/2} + O(r^{2 + 2 \nu})
\end{align*}
where $\delta_k$ is the displacement parameter of the $k^{\mathit{th}}$ neck.  This is because $I_k$ is an odd function with respect to the neck having $\delta_k = 0$, whereas the integral is being taken over the neck with $\delta_k \neq 0$.  Hence the integral $ \int_{\Sigma \cap \mathit{supp}( \chi_{\mathit{neck}, k} ) } \chi_{\mathit{neck}, k} I_k$ picks up the displacement of the $k^{\mathit{th}}$ neck from its position at $\delta_k = 0$. This same phenomenon arises in \cite[Prop.~27]{memazzeo}.
\end{proof}

\subsection{Proof of the Main Theorem}

It remains to find a value of the parameters $(\sigma, \delta, s)$ so that $\mathcal E_r(\sigma, \delta, s) = 0$.  As shown in Lemma \ref{lemma:projop}, this is equivalent to find a solution of the equation $B_r(\sigma, \delta, s) = 0$.  In what follows, we will continue to assume that $\eps = \mathcal O(r^2)$ and $\delta = \mathcal O(r)$ and this will be justified shortly.  As a consequence of Lemma \ref{prop:integralexp}, the equations that we must solve are as follows:
\begin{equation*}
	\begin{aligned}
		C_1 \delta_1 &= E_1(\sigma, \delta, s) \\
		\vdots \\
		C_{K-1} \delta_{K-1} &= E_{K-1}(\sigma, \delta, s) 
	\end{aligned}
	\qquad \mbox{and} \qquad 
	\begin{aligned}
		C_2 \eps_1 &= r^2 \mu_1(\sigma, s) + E_1'(\sigma, \delta, s) \\
		C_2 (\eps_2 - \eps_1) &= r^2 \mu_2 (\sigma, s) + E_2'(\sigma, \delta, s) \\
		\vdots \\
		C_2 (\eps_{K-1} - \eps_{K-2} ) &= r^2 \mu_{K-1}(\sigma, s) + E_{K-1}'(\sigma, \delta, s) \\
		- C_2 \eps_{K-1} &= r^2 \mu_K(\sigma, s) + E_{K}'(\sigma, \delta, s)
	\end{aligned}
\end{equation*}
where $\eps_k$ depends on $\sigma_k$ in an invertible manner as indicated in Step 2 of the construction of the approximate solution, and $E_k$, $E_k'$ are error quantities satisfying the bounds $|E_k| = \mathcal O(r^{-1 + 2 \nu})$ and $|E_k'| = \mathcal O(r^4)$.  We can abbreviate these equations by introducing the matrix $M := \left( \begin{smallmatrix} I & 0 \\ 0 & J \end{smallmatrix} \right)$
where $I$ is the $(K-1) \times (K-1)$ identity matrix and $J$ is the $K \times (K-1)$ matrix
$$J := \left( \begin{smallmatrix} 1 \\ -1 & 1 \\[-1ex] & &  \!\!\ddots \\	& & & \!\!\! -1 & 1 \\ & & &  &\!\!\! -1 \end{smallmatrix} \right) \, .$$
The equations become
\begin{equation}
	\label{eqn:firsteq}
	M (\delta, \eps)^t = (E, r^2 \mu + E')^t
\end{equation}
where $\delta := ( \delta_1, \ldots, \delta_{K-1})$, $\eps := (\eps_1, \ldots, \eps_{K-1})$ and so on for $E, E'$ and $\mu$.  

We will solve these equations in two steps as follows.  Note first that the matrix $M$ is injective but not surjective, with vectors in the image of $M$ satisfying the relation $(0, e) \cdot M (v, w) = 0$ for all $(v, w) \in \R^{2K-1}$, where $e := (1, 1, \ldots, 1)$. Let $\rho : \R^{2K-1} \rightarrow \R^{2K-2}$ be the orthogonal projection onto the image of $M$.  The equation 
\begin{equation}
	\label{eqn:secondeq}
	\rho M (\eps, \delta) = \rho( E, r^2 \mu + E')
\end{equation}
can now be solved using the implicit function theorem.  Thus the matrix $\rho M : \R^{2K-2} \rightarrow \R^{2K-2}$ is invertible and the equation at $r=0$ is just $\rho M (\eps, \delta) = 0$ which has the solution $(\eps, \delta) = 0$.  Hence the solution persists for small $r$.

We now have a solution $\eps := \eps_r(s)$ and $\delta_r(s)$ of \eqref{eqn:firsteq} for all sufficiently small $r$ and depending implicitly on the one remaining free parameter $s$. Moreover, we see that $\eps = \mathcal O(r^2)$ and $\delta = \mathcal O(r^{-1 + 2 \nu}) = \mathcal O(r)$ since $\nu \in (1, 2)$.  It remains to solve \eqref{eqn:secondeq} and we proceed as follows.  Once $(\eps, \delta)$  satisfy \eqref{eqn:secondeq}, then \eqref{eqn:firsteq} becomes equivalent to $ 0 = (0, e) \cdot M (\eps, \delta) = r^2 e \cdot \mu + e \cdot E' $ or simply
\begin{equation}
	\label{eqn:thirdeq}
	\sum_{k=1}^{K} \mu_k (\sigma_r(s), s ) + E''(\sigma_r(s), \delta_r(s), s) = 0
\end{equation}
where the error quantity satisfies the estimate $|E''| = \mathcal O(r^2)$.  

Equation \eqref{eqn:thirdeq} may or may not have a solution, depending on the nature of the function $\sum_k \mu_k$, which in turn depends on the specific nature of the prescribed mean curvature function $F$.  However, if the following two conditions are met, then the implicit function theorem guarantees the existence of a solution.  First, it must be the case that the equation at $r=0$ has a solution, in other words if the $F$-moments of the spheres $S_1, \ldots, S_K$ satisfy
$$\sum_{k=1}^K \mu_F(\partial B_1(p_k^0(s)) ) = 0 $$
for some $s$, where $p_k^0(s) := (s + 2(k-1), 0, \ldots, 0)$.  Second, if $s_0$ is the solution of this equation, then it must also be the case that the mapping
$$s \mapsto \sum_{k=1}^K \mu_F(\partial B_1(p_k^0(s)) )$$
has non-vanishing derivative at $s = s_0$.  If these conditions are satisfied, then the implicit function theorem 
implies that for $r$ sufficiently small, there is a solution $s(r)$ of \eqref{eqn:thirdeq}.  This completes the proof of the Main Theorem. \hfill \qedsymbol

\renewcommand{\baselinestretch}{1}
\small

\bibliography{pmc}
\bibliographystyle{amsplain}

\end{document}